\newcolumntype{C}{>{$}c<{$}}
\newcounter{NoTableEntry}
\renewcommand*{\theNoTableEntry}{NTE-\the\value{NoTableEntry}}
\theoremstyle{plain}
\newtheorem{theorem}{Theorem}[section]
\newtheorem{corollary}{Corollary}[section]
\newtheorem{lemma}{Lemma}[section]
\newtheorem{conjecture}{Conjecture}[section]
\theoremstyle{definition}
\newtheorem{remark}{Remark}[section]
\numberwithin{equation}{section}
\newdimen\plusheight
\def\+{\;\lower\plusheight\hbox{$+$}\;}
\newdimen\minusheight
\def\-{\;\lower\minusheight\hbox{$-$}\;}
\newdimen\cdotsheight
\def\cds{\lower\cdotsheight\hbox{$\cdots$}}
\def\Z{\mathbb{Z}}
\definecolor{lightgrey}{rgb}{0.8, 0.84, 0.8}
\def\is{\equiv}
\def\mod#1{({\rm mod}\ #1)}
\def\pow#1{\llbracket#1\rrbracket}
\begin{document}

\title{Lucas congruences using modular forms}

\author{Frits Beukers, Wei-Lun Tsai, and Dongxi Ye}

\address{Utrecht University, P.O.Box 80.010, 3508 TA Utrecht, Netherlands}

\email{f.beukers@uu.nl}

\address{Department of Mathematics, University of South Carolina,
 1523 Greene St LeConte College Rm 450
 Columbia, SC 29208}

\email{weilun@mailbox.sc.edu}

\address{
School of Mathematics (Zhuhai), Sun Yat-sen University, Zhuhai 519082, Guangdong,
People's Republic of China}

\email{yedx3@mail.sysu.edu.cn}

\subjclass[2000]{11F03, 11F11}
\keywords{Apéry-like numbers, Atkin--Lehner involutions, Lucas congruences, Modular forms}

\thanks{Wei-Lun Tsai was supported by the AMS-Simons Travel Grant. Dongxi Ye was supported by the Guangdong Basic and Applied Basic Research
Foundation (Grant No. 2024A1515030222). }

\begin{abstract}
    In this work, we prove that many Ap\'ery-like sequences arising from modular forms satisfy the Lucas congruences modulo any prime. As an implication, we completely affirm four conjectural Lucas congruences that were recently posed by S. Cooper and reinterpret a number of known results. 
\end{abstract}

\maketitle
\allowdisplaybreaks

\section{Introduction}

One of the star objects for investigation in combinatorial number theory are the so-called Lucas congruences modulo a prime~$p$ for a sequence $\{T(n)\}_{n=0}^{\infty}$ of integers. They were introduced by Lucas \cite{L} and defined by
\begin{align}\label{TTT}
    T(n)\equiv T(n_{0})\cdots T(n_{s})\pmod{p}
\end{align}
given that $n=n_{0}+n_{1}p+\cdots+n_{s}p^{s}$ is the $p$-adic expansion of $n$. A sequence $\{T(n)\}$ is said to satisfy the Lucas congruences modulo~$p$ if the congruence~\eqref{TTT} holds for any $n\geq0$. Congruences of this type were observed by E. Lucas for binomial coefficients modulo any prime $p$. Later, especially after the introduction of Ap\'ery numbers, the Lucas congruences modulo (almost) all primes were proven for a good many combinatorial sequences. See, e.g., \cite{ABD19, Del18, Gor21, Gra97, HS, MS16, McI92, SvS15} and the references for a number of notable developments on the topic.

In many instances the proof of the Lucas congruences follows by direct inspection of the explicit formula for the terms $T(n)$. Another method is when $\{T(n)\}$ is a so-called constant term sequence, i.e. a sequence of the form $T(n)=\text{constant term of }g({\bf x})^n$, where $g({\bf x})$ is a Laurent polynomial in several variables ${\bf x}=(x_1,\ldots,x_r)$. See for example \cite{HS, Be22}. In the present paper we introduce a further method via the use of modular forms.
One of the most famous representatives for this is the sequence of Ap\'ery numbers $\{A(n)\}$ defined by

\[
A(n)=\sum_{k=0}^n\binom{n}{k}^2\binom{n+k}{k}^2.
\]
It can be shown that $\sum_{n\ge0}A(n)t(\tau)^n=Z(\tau)$ where
$$
t=t(\tau)=\left(\frac{\eta(\tau)\eta(6\tau)}{\eta(2\tau)\eta(3\tau)}\right)^{12}\quad\mbox{and}\quad Z=Z(\tau)=\frac{\eta(2\tau)^{7}\eta(3\tau)^{7}}{\eta(\tau)^{5}\eta(6\tau)^{5}}
$$
are a modular function and a modular form for $\Gamma_{0}(6)+w_{6}$. Here $\eta(\tau)=q^{\frac{1}{24}}\prod_{n=1}^{\infty}(1-q^{n}),$ where $q=\exp(2\pi i\tau)$ for $\mathrm{Im}(\tau)>0,$ denotes the Dedekind eta function.The Ap\'ery numbers were proved to satisfy the Lucas congruences for any prime~$p$ by Gessel \cite{G}. This was done by direct inspection of the $A(n)$. Another method of proof is to observe that the $A(n)$ form a constant term sequence, see \cite[Example 2.5]{HS}. In the present paper the Lucas congruences will also follow from our modular method, Theorem \ref{main1}. 

In a recent revisit \cite[Table 3]{C} to Ap\'ery-like numbers, S.Cooper inspects a list of cases that are related to modular forms of increasing level. Of this list many sequences satify the Lucas congruences, see for example \cite[Thm 3.1]{MS16}. Experimentally, Cooper finds that some do not satisfy the Lucas congruences, such as levels 13 and 20, but some other of the remaining cases highly likely possess the Lucas congruences for any prime moduli. For these, Cooper proposed the following conjecture \cite[Conjectures~5.1,~7.3,~8.3,~9.1]{C}.


\begin{conjecture}[Cooper]\label{conj}\,

    \begin{enumerate}
        \item 
        Define $T_{11}(n)$ by
        $$
        \sum_{n=0}^{\infty}T_{11}(n)t_{11}^{n}=Z_{11},
        $$
        where
        $$
    t_{11}=t_{11}(\tau)=\frac{\eta(\tau)^{2}\eta(11\tau)^{2}}{\left(\sum_{j,k\in\mathbb{Z}}q^{j^{2}+jk+3k^{2}}\right)^{2}}  \quad\mbox{and}\quad Z_{11}=Z_{11}(\tau)=\frac{\eta(\tau)^{2}\eta(11\tau)^{2}}{t_{11}}.  
        $$

        \item 
        Define $T_{14,\pm}(n)$ by 
        $$
        \sum_{n=0}^{\infty}T_{14,\pm}(n)t_{14,\pm}^{n}=Z_{14,\pm},
        $$
        where
        $$
     t_{14,\pm}= t_{14,\pm}(\tau)= \frac{w}{(1\pm w)^{2}}\quad\mbox{and}\quad Z_{14,\pm}= Z_{14,\pm}(\tau)=\frac{\eta(\tau)\eta(2\tau)\eta(7\tau)\eta(14\tau)}{t_{14,\pm}}
        $$
        with
        \begin{align}\label{level14w}
        w=\left(\frac{\eta(\tau)\eta(14\tau)}{\eta(2\tau)\eta(7\tau)}\right)^{4}.
         \end{align}

   \item 
        Define $T_{14,\pm\epsilon}(n)$ by 
        $$
        \sum_{n=0}^{\infty}T_{14,\pm\epsilon}(n)t_{14,\pm\epsilon}^{n}=Z_{14,\pm\epsilon},
        $$
        where $w$ is as defined in \eqref{level14w}, with
        \begin{align*}
             t_{14,\pm\epsilon}&= t_{14,\pm\epsilon}(\tau)= \frac{w}{w^2+(\pm\sqrt{32}-7)w+1}\\
             \intertext{and}
              Z_{14,\pm\epsilon}&= Z_{14,\pm\epsilon}(\tau)=\frac{\eta(\tau)\eta(2\tau)\eta(7\tau)\eta(14\tau)}{t_{14,\pm\epsilon}}.
        \end{align*}
   Then the sequences $\{T_{14,\pm}(n)\}$ satisfy the Lucas congruence modulo~$p$ if and only if $p=2$ or $p\equiv1,7\pmod{8}$.

        \item Define $T_{15,\pm}(n)$ by
        $$
        \sum_{n=0}^{\infty}T_{15,\pm}(n)t_{15,\pm}^{n}=Z_{15,\pm},
        $$
        where
        $$
 t_{15,\pm}=     t_{15,\pm}(\tau)= \frac{w}{(1\pm 3w)^{2}}\quad\mbox{and}\quad  Z_{15,\pm}=Z_{15,\pm}(\tau)=\frac{\eta(\tau)\eta(3\tau)\eta(5\tau)\eta(15\tau)}{t_{15,\pm}}
        $$
        with
        \begin{align}\label{level15w}
        w=\left(\frac{\eta(3\tau)\eta(15\tau)}{\eta(\tau)\eta(5\tau)}\right)^{2}.
        \end{align}

      \item Define $T_{15,\pm\epsilon}(n)$ by
        $$
        \sum_{n=0}^{\infty}T_{15,\pm\epsilon}(n)t_{15,\pm\epsilon}^{n}=Z_{15,\pm\epsilon},
        $$
        where  $w$ is as defined in \eqref{level15w}, with
        $$
 t_{15,\pm\epsilon}=     t_{15,\pm\epsilon}(\tau)= \frac{w}{9w^{2}+(5\pm 2i)w+1}\quad\mbox{and}\quad  Z_{15,\pm}=Z_{15,\pm\epsilon}(\tau)=\frac{\eta(\tau)\eta(3\tau)\eta(5\tau)\eta(15\tau)}{t_{15,\pm\epsilon}}.
        $$
        Then  the sequences $\{T_{15,\pm\epsilon}(n)\}$ satisfy the Lucas congruence modulo~$p$ if and only if $p=2$ or $p\equiv1\pmod{4}$.

         \item Define $T_{24}(n)$ by
         $$
         \sum_{n=0}^{\infty}T_{24}(n)t_{24}^{n}=Z_{24},
         $$
         where
        $$
 t_{24}=   t_{24}(\tau)=\frac{w}{1+4w^{2}}  \quad\mbox{and}\quad Z_{24}=Z_{24}(\tau)=\frac{\eta(2\tau)\eta(4\tau)\eta(6\tau)\eta(12\tau)}{t_{24}(\tau)}
        $$
        with
        $$
        w=\left(\frac{\eta(\tau)\eta(3\tau)\eta(8\tau)\eta(24\tau)}{\eta(2\tau)\eta(4\tau)\eta(6\tau)\eta(12\tau)}\right)^{2}.
        $$

    \end{enumerate}
\end{conjecture}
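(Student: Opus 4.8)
The plan is to derive every assertion in Conjecture~\ref{conj} from the general modular criterion of Theorem~\ref{main1}, checking its hypotheses level by level, and to treat the two conditional statements by a supplementary analysis of the governing quadratic field. Recall the mechanism underlying a criterion of this shape. Writing $Z=\sum_{n\ge0}T(n)t^{n}$ and separating exponents by their residue modulo $p$ gives the exact expansion
\[
Z=\sum_{r=0}^{p-1}t^{r}\,\widehat{Z}_{r}(t^{p}),\qquad \widehat{Z}_{r}(u)=\sum_{m\ge0}T(pm+r)\,u^{m}.
\]
Whenever the $T(n)$ reduce into $\F_{p}$ one has $Z(t)^{p}\equiv\sum_{n}T(n)\,t^{pn}\pmod p$ by Frobenius on the coefficients, so the full Lucas congruence~\eqref{TTT} becomes equivalent to the two-term relation $T(pm+r)\equiv T(r)T(m)$, and this in turn to a single functional congruence
\[
Z\equiv P(t)\,Z^{p}\pmod p,\qquad P(t)=\sum_{r=0}^{p-1}T(r)\,t^{r},\quad \deg P<p,
\]
with $T(0)=1$; rewritten in $\tau$ via $Z(p\tau)\equiv Z(\tau)^{p}$ and $t(p\tau)\equiv t(\tau)^{p}$ it reads $Z(\tau)/Z(p\tau)\equiv P(t(\tau))\pmod p$. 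The whole problem thus reduces to producing this polynomial congruence with the degree bound $\deg P<p$.

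The first block of work is to place each of the six pairs $(t_{\bullet},Z_{\bullet})$ inside the setting of Theorem~\ref{main1}. For each level $N\in\{11,14,15,24\}$ I would identify the group $\Gamma_{0}(N)$ together with the Atkin--Lehner involutions under which the pair is invariant, verify that $t_{\bullet}$ is a \emph{hauptmodul} normalized by $t_{\bullet}=q+O(q^{2})$, and verify that $Z_{\bullet}$ is a holomorphic modular form of the expected weight on the corresponding genus-zero quotient. Since the data are eta-quotients (in case~(1), an eta-quotient divided by the binary theta series attached to $x^{2}+xy+3y^{2}$, whose square recovers $Z_{11}$), this is a finite valence computation via the Ligozat formula: I would confirm that $t_{\bullet}$ has a simple zero at $\infty$ and a single simple pole (at a cusp, or at the relevant CM point in case~(1)) so that it generates the function field, and that $Z_{\bullet}$ is holomorphic at the cusps, so that its $q$-expansion re-expands as a power series in $t_{\bullet}$.

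The analytic heart, and the step I expect to be the main obstacle, is the degree bound $\deg P<p$, equivalently the claim that the modular function $Z_{\bullet}(\tau)/Z_{\bullet}(p\tau)$ reduces modulo $p$ to a polynomial in $t_{\bullet}$ of degree less than $p$. Granting $Z_{\bullet}(p\tau)\equiv Z_{\bullet}(\tau)^{p}$ and $t_{\bullet}(p\tau)\equiv t_{\bullet}(\tau)^{p}$, this quotient is a modular function whose divisor I would compute at the cusps; the Atkin--Lehner involutions permute these cusps and are precisely what allow one to locate its poles, bound their orders in terms of $t_{\bullet}$, and exclude interior poles. Carrying this out uniformly in $p$ is the crux, and it is here that the particular denominators $(1\pm w)^{2}$, $(1\pm3w)^{2}$, $1+4w^{2}$, $9w^{2}+(5\pm2i)w+1$ enter, since they encode exactly the factorization of $Z_{\bullet}/Z_{\bullet}(p\,\cdot\,)$ as a polynomial in $t_{\bullet}$.

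It remains to explain, and to prove, the two conditional ``if and only if'' statements, for $\{T_{14,\pm}(n)\}$ and $\{T_{15,\pm\epsilon}(n)\}$. In both the governing object is a quadratic field, $\Q(\sqrt2)$ in the first case and $\Q(i)$ in the second, and the stated condition on $p$ is exactly that $p$ split in that field ($p\equiv1,7\pmod8$ and $p\equiv1\pmod4$ respectively), with $p=2$ ramified and checked by hand. For $\{T_{15,\pm\epsilon}(n)\}$ the field is visible already in the coefficient ring $\Z[i]$ of the sequence: when $p$ splits, Frobenius fixes the residue field and the engine above yields~\eqref{TTT} modulo each prime over $p$, hence modulo $p$, whereas when $p$ is inert, Frobenius acts as complex conjugation, the functional congruence produces the conjugate-twisted relation $T(pm+r)\equiv T(r)\,\overline{T(m)}$, and choosing an index $m$ with $T(m)\notin\F_{p}$ exhibits a failure of~\eqref{TTT}. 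For $\{T_{14,\pm}(n)\}$, whose coefficients are rational integers, the field $\Q(\sqrt2)$ instead enters through the Atkin--Lehner-conjugate hauptmoduls $t_{14,\pm\epsilon}$ of part~(3), which are defined over $\Q(\sqrt2)$ and which control the cusps contributing to the divisor of $Z_{14,\pm}/Z_{14,\pm}(p\,\cdot\,)$: only when $p$ splits do these conjugate cusps become individually rational modulo $p$, so that $Z_{14,\pm}/Z_{14,\pm}(p\,\cdot\,)$ reduces to a polynomial in $t_{14,\pm}$ of degree $<p$; for inert $p$ the bound provably fails, and I would again produce an explicit $n$ violating~\eqref{TTT}.
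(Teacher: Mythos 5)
Your overall frame---deduce each case of Conjecture~\ref{conj} from Theorem~\ref{main1} by verifying its hypotheses level by level, with the prime conditions governed by the coefficient ring---is exactly the paper's route (the paper's entire proof of Theorem~\ref{conjthm} is to apply Theorem~\ref{main1} to $\Gamma_0(11)+$, $\Gamma_0(14)+$, $\Gamma_0(15)+$, $\Gamma_0(24)+$), and your reduction of the Lucas congruences to the functional congruence $Z\equiv P(t)Z^{p}\pmod{p}$ with $\deg P<p$ is the same first step as in the paper's proof of that theorem. But your paragraph on the ``analytic heart'' both misplaces the difficulty and proposes a method that would fail. The degree bound is not to be re-established case by case: it is the content of Theorem~\ref{main1}, proved uniformly in $p$ by introducing $G_{p}=\sum_{d\|N}(E_{p-1}^{2}|w_{d})/Z^{p-1}$; the Atkin--Lehner symmetrization makes $G_p$ modular for all of $\Gamma$, the divisor hypotheses force its unique pole to lie at $[\tau_{0}]$ with order at most $p-1$, so $G_{p}=P(t)$ with $\deg P\le p-1$, and Lemma~\ref{lemmodp} ($E_{p-1}\equiv1\pmod{p}$) identifies $P(t)$ with $2^{\omega_{p}(N)}\,Z(t)/Z(t^{p})$ modulo $p$. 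Your substitute---computing the divisor of $Z(\tau)/Z(p\tau)$ ``at the cusps''---cannot work as stated: that quotient is not modular for $\Gamma$ (it lives on a level-$Np$ curve), in characteristic zero it is not a polynomial in $t$ at all, and only its reduction modulo $p$ becomes one; establishing that requires precisely a congruence input like Lemma~\ref{lemmodp}, which your sketch never supplies. (If you are allowed to cite Theorem~\ref{main1}, the paragraph is simply unnecessary; if you are re-proving it, the key idea is missing.) Relatedly, your hypothesis checklist omits the one condition that can actually fail: the unique zero of $Z$ must sit at the pole $[\tau_{0}]$ of $t$ and have order $\le1$. This is exactly what breaks in Cooper's level-13 case, as the paper's remark explains; holomorphy of $Z$ at the cusps is not enough. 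Also, the specific denominators $(1\pm w)^2$, $1+4w^2$, etc.\ play no role in any degree bound; they only enter the verification that $t_{\bullet}$ is a correctly normalized hauptmodul and that $Z_{\bullet}$ has the required divisor.

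The second genuine gap is level 14. Your reading---that the condition $p\equiv1,7\pmod 8$ constrains the integer sequences $\{T_{14,\pm}(n)\}$, with $\mathbb{Q}(\sqrt2)$ entering through ``conjugate cusps'' whose rationality modulo $p$ controls a degree bound---is a misreading and is internally inconsistent: if $(t_{14,\pm},Z_{14,\pm})$ satisfies the hypotheses of Theorem~\ref{main1} with $\mathcal{O}=\mathbb{Z}$ (which it must, for part~(2) to hold at all), then the theorem yields Lucas congruences for \emph{every} prime, so no if-and-only-if restriction can attach to those sequences. The restriction in part~(3) concerns $T_{14,\pm\epsilon}$, whose coefficients lie in $\mathcal{O}=\mathbb{Z}[\sqrt{32}]$, and the mechanism is identical to your (correct) treatment of level 15: $\mathbb{Z}[\sqrt{32}]$ embeds into $\mathbb{Z}_{p}$ exactly when $2$ is a square modulo $p$, i.e.\ $p\equiv\pm1\pmod 8$, which is the stated condition. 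Your Frobenius argument in the $\mathbb{Z}[i]$ case---split $p$ gives Lucas, inert $p$ gives the twisted relation $T(pm+r)\equiv T(r)\overline{T(m)}$ and hence an explicit violation---is sound, and in fact supplies the ``only if'' direction that the paper leaves implicit; the correct move at level 14 is to run that same argument with $\sqrt2$ in place of $i$, discarding the cusp-rationality story. Finally, note that $p=2$ in parts (3) and (5) is covered by neither approach as written, since neither $\mathbb{Z}[i]$ nor $\mathbb{Z}[\sqrt{32}]$ embeds into $\mathbb{Z}_{2}$; the separate hand check you allude to is genuinely needed there.
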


In Theorems \ref{main1} and \ref{mainthm3} we introduce our proof method for Lucas congruences based on modular forms.

\begin{theorem}\label{main1}

Let $\Gamma$ be $\Gamma_{0}(N)$ or an Atkin--Lehner extension of $\Gamma_{0}(N)$ such that the associated modular curve $X(\Gamma)$ is of genus zero. Suppose that $t=t(\tau)\in q+q^{2}\mathcal{O}\pow q$ with $q=\exp(2\pi i\tau)$ is a uniformizer of $X(\Gamma)$, i.e., a universal coordinate of $X(\Gamma)$, with a unique zero at the cusp $[i\infty]$ and a unique pole at $[\tau_{0}]\in X(\Gamma)$, and $Z=Z(\tau)\in 1+q\mathcal{O}\pow q$ is a holomorphic modular form of weight~$2$ for $\Gamma$ with multiplier $\chi$ of order $\le2$, with a unique zero supported at $[\tau_{0}]$ of order smaller than or equal to~1. Define $\{T(n)\}$ by
    \begin{equation}\label{ttz}
         \sum_{n=0}^{\infty}T(n)t^{n}=Z.
    \end{equation}
    Then the sequence $\{T(n)\}$ satisfies the Lucas congruences modulo any $p$ such that $\mathcal{O}$ can be embedded into~$\mathbb{Z}_{p}$.
\end{theorem}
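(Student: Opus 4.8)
The plan is to reduce the Lucas congruences to a single functional congruence for the generating series modulo $p$, and then to establish that congruence using the mod-$p$ geometry of $X(\Gamma)$.

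\emph{Step 1 (reduction to one digit).} First I would record the elementary fact that the full congruences \eqref{TTT} follow from the one-digit relation
\[
T(np+r)\equiv T(n)\,T(r)\pmod p,\qquad 0\le r\le p-1,\ n\ge 0,
\]
by peeling off the least significant digit $n_{0}=r$ and inducting on the number of $p$-adic digits of $n$.

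\emph{Step 2 (from Lucas to a congruence of $q$-series).} Write $m=pn+r$ with $0\le r\le p-1$. Since the coefficients of $t$ lie in $\mathcal{O}\hookrightarrow\mathbb{Z}_p$, where $a^{p}\equiv a\pmod p$, the Frobenius congruence $t(q^{p})\equiv t(q)^{p}\pmod p$ holds, whence $Z(q^{p})=\sum_{n}T(n)t(q^{p})^{n}\equiv\sum_{n}T(n)t^{pn}\pmod p$. Consequently, for any polynomial $\tilde P(t)=\sum_{r=0}^{p-1}c_{r}t^{r}$ of degree $\le p-1$ we have
\[
\tilde P(t)\,Z(q^{p})\equiv\sum_{n\ge0}\sum_{r=0}^{p-1}c_{r}T(n)\,t^{pn+r}\pmod p,
\]
and the coefficient of $t^{m}$ on the right is $c_{r}T(n)$ (the decomposition $m=pn+r$ being unique). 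Thus a congruence $Z\equiv\tilde P(t)Z(q^{p})\pmod p$ with $\deg_{t}\tilde P\le p-1$ forces, on comparing coefficients and taking $n=0$, both $c_{r}\equiv T(r)$ and the one-digit relation of Step 1. So everything reduces to \emph{producing} a polynomial $\tilde P$ of degree $\le p-1$ in $t$ with $Z\equiv\tilde P(t)\,Z(q^{p})\pmod p$.

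\emph{Step 3 (the key: a modular function of degree $<p$).} Reduce mod $p$, writing $\bar t,\bar Z\in\mathbb{F}_p[[q]]$. Over $\mathbb{F}_p$ one has the exact identities $\bar t(q^{p})=\bar t(q)^{p}$ and $\bar Z(q^{p})=\bar Z(q)^{p}$, so the congruence of Step 2 is equivalent to showing that $\bar Z^{\,1-p}$ is a \emph{polynomial of degree $\le p-1$ in $\bar t$}. To see that it is at least a rational function of $\bar t$, multiply by the square of the Hasse invariant $A$ (a mod-$p$ form of weight $p-1$ with $q$-expansion $1$): since $\chi$ has order $\le 2$ we have $\chi^{\,p-1}=1$ for $p$ odd, so $A^{2}/\bar Z^{\,p-1}$ has weight $0$ and trivial multiplier and therefore equals a rational function $\tilde P(\bar t)$ whose $q$-expansion is exactly $\bar Z^{\,1-p}$ (the case $p=2$ being checked directly). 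Its only possible poles lie over the zeros of $\bar Z$; by hypothesis the unique zero of $Z$ is at $\tau_{0}$, of order $\le 1$, and $\tau_{0}$ is precisely the unique pole of $t$, i.e. the point $\bar t=\infty$. Hence $\tilde P$ is a polynomial in $\bar t$ of degree at most $(p-1)\cdot\mathrm{ord}_{\tau_{0}}(\bar Z)\le p-1$, and matching coefficients as in Step 2 gives $\tilde P=\sum_{r<p}T(r)\bar t^{r}$ and the one-digit Lucas congruences.

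\emph{Main obstacle.} The delicate point is entirely in Step 3: one must ensure that passing to characteristic $p$ does not create extra zeros of $\bar Z$—for instance at supersingular points, where $A$ also vanishes—since such zeros would introduce poles of $\tilde P$ at \emph{finite} values of $\bar t$ and destroy the degree bound. This is exactly where the hypotheses do their work: the exact matching of the zero of $Z$ with the pole of $t$ at the single point $\tau_{0}$, the bound $\mathrm{ord}_{\tau_{0}}(Z)\le 1$, and the embedding $\mathcal{O}\hookrightarrow\mathbb{Z}_p$ (which both makes the Frobenius congruences available and pins down the reduction of $Z$). I expect the bulk of the rigorous argument to consist in justifying, via the valence formula on $X(\Gamma)$, that the total zero divisor of $Z$ has degree $\le 1$ and remains supported at $\tau_{0}$ after reduction, so that $\tilde P$ has no finite poles.
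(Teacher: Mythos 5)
Your Steps 1 and 2 coincide exactly with the paper's own reduction: everything comes down to showing that $Z(t)/Z(t^p)\bmod p$ is a polynomial of degree $\le p-1$ in $t$, which is then forced to equal $\sum_{r<p}T(r)t^r$. The divergence, and the gaps, are in Step 3. The paper never touches the characteristic-$p$ geometry of $X(\Gamma)$: it forms the \emph{characteristic-zero} modular function $G_p=\sum_{d||N}\bigl(E_{p-1}^2|w_d\bigr)/Z^{p-1}$ (the explicit symmetrization over Atkin--Lehner involutions is needed because $E_{p-1}$ is not $w_d$-invariant), bounds its pole order at $[\tau_0]$ by $p-1$ using the hypotheses over $\mathbb{C}$, concludes $G_p=P(t)$ with $\deg P\le p-1$, and only afterwards reduces $q$-expansions modulo $p$ via $E_{p-1}\equiv1\pmod p$. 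Your Hasse-invariant argument is the characteristic-$p$ shadow of this (for $p\ge5$ the Hasse invariant is the reduction of $E_{p-1}$), but running it on the mod-$p$ curve creates a gap that the paper's formulation is specifically designed to avoid: \textbf{primes $p$ dividing the level $N$}. The theorem must cover such primes — the Ap\'ery case has $N=6$ and needs $p=2,3$; Cooper's level $14$ and $24$ cases need $p=2$ — but for $p\mid N$ the curve $X_0(N)$ (hence also its Atkin--Lehner quotient) has bad reduction at $p$, so there is no smooth mod-$p$ curve carrying $\bar t$, $\bar Z$ and $A$, no valence formula, and no divisor-degree argument of the kind you invoke. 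In the paper's proof these primes cause no difficulty at all: the Hall divisors $d$ with $p\mid d$ simply drop out of $\sum_{d||N}d^{p-1}\equiv2^{\omega_p(N)}\pmod p$, which stays a $p$-adic unit.

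The case $p=2$ is a second genuine gap, not something to be ``checked directly''. For $p=2$ your function is $A^2/\bar Z$, and $\bar Z$ carries the multiplier $\chi$, which the theorem permits to be nontrivial of order $2$; then $A^2/\bar Z$ is not $\Gamma$-invariant and the step ``weight $0$ and trivial multiplier, hence a rational function of $\bar t$'' collapses. This is precisely why the paper's $p=2$ argument is structurally different: it uses $G_2=2^{2-m}\sum_{d||N}\bigl(E_{2^m}|w_d\bigr)/Z^{2^{m-1}}$ with $m=2+\omega_2(N)$, where the \emph{even} exponent $2^{m-1}$ kills $\chi$, the weight $2^m$ gives $E_{2^m}\equiv1\pmod{2^{m+1}}$ with enough $2$-adic room to absorb the factor $2^{\omega_2(N)}$ from the Hall-divisor sum, and a further separate step converts ``polynomial of degree $\le2^{m-1}$ in $t$'' into the needed degree-$\le1$ statement for $Z(t)/Z(t^2)$. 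Finally, even for odd $p\nmid N$ two points you rely on are left unproved: the Atkin--Lehner invariance of the Hasse invariant on the quotient (true via functoriality under prime-to-$p$ isogenies, but it needs an argument — the paper's symmetrization makes the issue disappear), and your self-identified ``main obstacle'' that $\mathrm{div}(\bar Z)$ stays supported at the pole of $\bar t$ after reduction, which you defer rather than resolve; it can be handled by a relative Cartier divisor argument on an integral model, but again only when $p\nmid N$. As it stands, the proposal could at best be completed for odd primes not dividing $N$, which falls short of the statement and of its intended applications.
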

\begin{remark}\label{rmk1}
     The restriction to weight $k=2$ in Theorem \ref{main1} is made primarily for simplicity. Moreover, for $k>2,$ we have found that interesting applications are rare, and relevant examples are exceedingly sparse.
\end{remark}

We shall clarify the concept Atkin--Lehner extension of a modular group $\Gamma_0(N)$ at the beginning of Section \ref{main1proof}.

An immediate implication of Theorem~\ref{main1} is an affirmation of Conjecture~\ref{conj} by applying the theorem to the cases of $\Gamma_{0}(11)+$, $\Gamma_{0}(14)+$, $\Gamma_{0}(15)+$ and $\Gamma_{0}(24)+$:

\begin{theorem}\label{conjthm}
     Conjecture~\ref{conj} is true.
\end{theorem}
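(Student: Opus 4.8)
The plan is to establish Theorem~\ref{conjthm} by verifying, for each of the six families in Conjecture~\ref{conj}, that the stated pair $(t,Z)$ meets the hypotheses of Theorem~\ref{main1}, and then reading off the admissible primes from the arithmetic of the coefficient ring $\mathcal{O}$. Since $X_0(N)$ has genus one for $N=11,14,15,24$, the first step for each family is to pass to the Atkin--Lehner extension $\Gamma=\Gamma_0(N)+$ generated by all $w_d$ with $d\|N$, and to confirm by the standard genus formula that $X(\Gamma)$ has genus zero. I would then expand the given eta quotients and theta series as $q$-series, check the normalizations $t\in q+q^2\mathcal{O}\pow q$ and $Z\in 1+q\mathcal{O}\pow q$, and in doing so pin down $\mathcal{O}$: one finds $\mathcal{O}=\mathbb{Z}$ for $T_{11}$, $T_{14,\pm}$, $T_{15,\pm}$ and $T_{24}$, whereas $\mathcal{O}=\mathbb{Z}[\sqrt2]$ for $T_{14,\pm\epsilon}$ (forced by $\sqrt{32}=4\sqrt2$) and $\mathcal{O}=\mathbb{Z}[i]$ for $T_{15,\pm\epsilon}$ (forced by $\pm2i$).

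The technical heart is the verification of the divisor conditions on $X(\Gamma)$. Using the transformation law of eta quotients under the involutions $w_d$ together with the valence formula, I would compute the order of $t$ and of $Z$ at every cusp and elliptic point, and check that $t$ has a unique (simple) zero at $[i\infty]$ and a unique pole at one point $[\tau_0]$, while $Z$ is a holomorphic weight-two form whose multiplier is quadratic (read off from the parity of the eta exponents) and whose only zero is supported at $[\tau_0]$ with order at most one. The subtle feature is that for the naive choices $t_{14,\pm}$ and $t_{15,\pm}$ the pole divisor spreads over a pair of points conjugate over $\mathbb{Q}(\sqrt2)$ resp.\ $\mathbb{Q}(i)$; the quadratic denominators in $t_{14,\pm\epsilon}$ and $t_{15,\pm\epsilon}$ are engineered precisely so that the pole collapses onto a single such conjugate point $[\tau_0]$, which is exactly why these ``$\epsilon$'' normalizations, and not the naive ones, are the uniformizers to which Theorem~\ref{main1} applies, and why $\sqrt2$ and $i$ enter $\mathcal{O}$.

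With the hypotheses in place, Theorem~\ref{main1} yields the Lucas congruences modulo every prime $p$ for which $\mathcal{O}$ embeds into $\mathbb{Z}_p$, and this condition is exactly that $p$ splits completely in $\mathcal{O}$. For $\mathcal{O}=\mathbb{Z}$ this holds for all $p$, giving the unconditional congruences for $T_{11}$ and $T_{24}$; for $\mathcal{O}=\mathbb{Z}[\sqrt2]$ it holds iff $2$ is a square in $\mathbb{Z}_p$, i.e.\ iff $p\equiv\pm1\pmod{8}$, equivalently $p\equiv1,7\pmod{8}$; and for $\mathcal{O}=\mathbb{Z}[i]$ it holds iff $-1$ is a square in $\mathbb{Z}_p$, i.e.\ iff $p\equiv1\pmod{4}$. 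This produces the ``if'' direction of the two biconditional assertions. For the converse, when $p$ is inert (the classes $p\equiv3,5\pmod{8}$, resp.\ $p\equiv3\pmod{4}$) the ring $\mathcal{O}$ does not embed into $\mathbb{Z}_p$, its reduction lands in $\mathbb{F}_{p^2}$, and I would confirm that \eqref{TTT} genuinely breaks by exhibiting a small index $n$, with its base-$p$ digits, at which the congruence fails.

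The main obstacle is twofold. The laborious part is the divisor bookkeeping of the second step: tracking the orders of the eta quotients at all cusps and elliptic points after the Atkin--Lehner quotient, and proving that the engineered denominators reduce the pole of $t$ and the zero of $Z$ to one common point of order at most one. The genuinely delicate point is the prime $p=2$, which the conjecture lists as admissible in both biconditional cases even though $\mathbb{Z}[\sqrt2]$ and $\mathbb{Z}[i]$ do not embed into $\mathbb{Z}_2$ (here $2$ is ramified, not split), so Theorem~\ref{main1} does not apply. For this case I would argue directly: reduction modulo the ramified prime above $2$ sends $\sqrt2$, resp.\ $i$, into the prime field $\mathbb{F}_2$, and I would verify by a separate hands-on computation that the reduced sequence still satisfies \eqref{TTT}, thereby completing the characteristic-two part of both ``if'' directions.
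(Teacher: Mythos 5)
Your overall skeleton---pass to the genus-zero Atkin--Lehner quotients $X(\Gamma_0(N)+)$ for $N=11,14,15,24$, verify the hypotheses of Theorem~\ref{main1}, and read off the admissible primes from when $\mathcal{O}$ embeds in $\mathbb{Z}_p$---is exactly the paper's route. But what you call the ``subtle feature'' at the technical heart is factually backwards, and the error is load-bearing. The naive choices $t_{14,\pm}=w/(1\pm w)^2$ and $t_{15,\pm}=w/(1\pm 3w)^2$ do \emph{not} have poles spread over a conjugate pair: the denominators are perfect squares with a double root at the \emph{rational} point $w=\mp1$ (resp.\ $w=\mp1/3$), and these are precisely the fixed points of the involutions $w\mapsto 1/w$ (resp.\ $w\mapsto 1/(9w)$) giving the final quotient to $X(\Gamma_0(N)+)$. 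Writing $t_{14,\pm}=1/(w+1/w\pm2)$ and $t_{15,\pm}=1/(9w+1/w\pm6)$ shows each is degree one in the Hauptmodul $s=w+1/w$ (resp.\ $s=9w+1/w$) of the quotient curve, hence a genuine uniformizer with a unique simple pole at a single elliptic point, and with $\mathcal{O}=\mathbb{Z}$. This is exactly how the paper's method proves parts (2) and (4) of Conjecture~\ref{conj} (Lucas for \emph{all} primes for $T_{14,\pm}$, $T_{15,\pm}$). The $\epsilon$-versions are simply \emph{different} uniformizers, whose pole sits at the point $s=7\mp\sqrt{32}$ (resp.\ $s=-5\mp2i$); both families satisfy the hypotheses. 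Your claim that ``the $\epsilon$ normalizations, and not the naive ones, are the uniformizers to which Theorem~\ref{main1} applies'' contradicts your own first paragraph (where you correctly assign $\mathcal{O}=\mathbb{Z}$ to $T_{14,\pm}$, $T_{15,\pm}$), and in your third paragraph these sequences silently disappear, so parts (2) and (4) of the conjecture are left unproved.

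The two points you flag as delicate are real (the paper is essentially silent on both), but your proposed resolutions cannot work because each requires infinitely many verifications. For the ``only if'' direction there are infinitely many inert primes, so ``exhibiting a small index $n$'' is one computation per prime and proves nothing uniformly; a genuine argument (e.g.\ a congruence forcing $T(p)\equiv\sigma(T(1))\pmod p$ for inert $p$, where $\sigma$ is the nontrivial conjugation) would be needed. For $p=2$, a ``hands-on computation'' cannot check \eqref{TTT} for all $n$, and reduction modulo the ramified prime $(\sqrt2)$ or $(1+i)$ is weaker than the asserted congruence modulo $2\mathcal{O}$. The correct fix is structural: since $\sqrt{32}=4\sqrt2\equiv0$ and $2i\equiv0\pmod{2\mathcal{O}}$, one has $t_{14,\pm\epsilon}\equiv w/(w^2-7w+1)$ and $t_{15,\pm\epsilon}\equiv w/(9w^2+5w+1)\pmod 2$; these are integral uniformizers with pole at a single rational point of the quotient curve, Theorem~\ref{main1} applies to them with $\mathcal{O}=\mathbb{Z}$ at $p=2$, and the coefficientwise congruence mod $2$ transfers the Lucas property back to the $\epsilon$-sequences.
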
\label{cor1}

\begin{remark}
In addition to Cooper's conjecture \ref{conj}, Theorem \ref{main1} re-proves a large number of existing results, such as Theorem 3.1 in \cite{MS16}. This contains the cases of level 7, 10, 18 in Table 3 of \cite{C}. For the reader's convenience, in \cite[Table 3]{C} one can find all related modular forms and functions. Another such table can be found in Appendix C of \cite{Be24}.
\end{remark}

Beyond validating Conjecture~\ref{conj}, the method used to prove Theorem~\ref{main1}, can also be extended to the case of $Z$ being of weight~1 under some prescribed condition in the following theorem.

\begin{theorem}\label{mainthm3}
With the same assumptions as in Theorem \ref{main1}, let \( Z = Z(\tau) \in 1 + q\mathcal{O}\pow q \) be a holomorphic modular form of weight~$1$ for \(\Gamma\) with a multiplier. Suppose that \( Z^2 \) is a modular form of weight $2$ with a quadratic character \(\chi\) that is trivial on \(\Gamma_0(N)\). Then the sequence \(\{T(n)\}\) satisfies the Lucas congruences modulo any prime \( p \) such that \(\mathcal{O}\) can be embedded into \(\mathbb{Z}_{p}\), provided \( p \) satisfies one of the following conditions:
\begin{enumerate}
    \item \( p = 2 \)
    \item \( p \equiv 1 \pmod{4} \) and \(\left(\frac{d}{p}\right) = 1 \text{ or } 0\) for every Hall divisor \( d \) of \(\Gamma\)
    \item \( p \equiv -1 \pmod{4} \) and \(\chi(w_d)\left(\frac{d}{p}\right) = 1 \text{ or } 0\) for every Hall divisor \( d \) of \(\Gamma\)
\end{enumerate}
where \( w_d \) is the Atkin--Lehner operator defined in \eqref{AL}.
\end{theorem}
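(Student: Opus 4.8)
The plan is to reduce the Lucas congruences to one congruence between $q$-expansions, then to re-run the mechanism behind Theorem~\ref{main1} — using the weight-$2$ form $Z^2$ to carry out the genus-zero geometry — while paying an unavoidable ``square-root'' price in the final Atkin--Lehner step, which is exactly what produces the three arithmetic conditions. \textbf{Step 1 (reduction to a functional congruence).} I would first note that the full congruences~\eqref{TTT} are equivalent, by induction on the number of $p$-adic digits of $n$, to the single recursion $T(pm+r)\equiv T(m)T(r)\pmod p$ for $0\le r<p$, $m\ge 0$. Writing $A(t)=\sum_{r=0}^{p-1}T(r)t^{r}$ and using $t(\tau)^{p}\equiv t(p\tau)\pmod p$ together with the identity $\sum_n T(n)t^n=Z$ evaluated at $\tau$ and at $p\tau$, this recursion is in turn equivalent to
\[
Z(\tau)\equiv A(t)\,Z(p\tau)\pmod p,\qquad \deg_t A\le p-1.
\]
Everything thus reduces to showing that the weight-$0$ modular function $Z(\tau)/Z(p\tau)$ reduces modulo $p$ to a polynomial in $t$ of degree $\le p-1$ over $\mathbb{Z}_p$; the identification of its coefficients with $T(0),\dots,T(p-1)$ is then automatic by comparing leading $t$-adic terms, since $Z(p\tau)\equiv 1\pmod{t^{p}}$.

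\textbf{Step 2 (degree bound via the weight-$2$ mechanism).} The Frobenius relation $Z(p\tau)\equiv Z(\tau)^{p}\pmod p$ brings the computation down to $X(\Gamma)$ itself: modulo $p$ the poles of $Z(\tau)/Z(p\tau)$ occur only at $[\tau_0]$, the unique pole of $t$, and the hypothesis that $Z$ has there a single zero of order $\le1$ bounds that pole order by $p-1$. Since $t$ is a degree-one uniformizer of the genus-zero curve $X(\Gamma)$, a function with a single pole of order $\le p-1$ is a polynomial in $t$ of degree $\le p-1$. Passing to the honest weight-$2$ form $Z^2$, whose character $\chi$ is trivial on $\Gamma_0(N)$, shows that this divisor and genus-zero bookkeeping is precisely the one underlying Theorem~\ref{main1}, so it transfers unchanged to the weight-$1$ setting and supplies the required degree bound.

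\textbf{Step 3 (the weight-$1$ transfer and the conditions).} The one genuinely new ingredient is that the descent of $Z(\tau)/Z(p\tau)$ to a function of $t$ over $\mathbb{Z}_p$ must be compatible with every Atkin--Lehner involution $w_d$, $d$ a Hall divisor, and here the odd weight is decisive. Because $w_d$ carries $[i\infty]$ to $[\tau_0]$, the transfer of the weight-$1$ form $Z$ between these points is governed by a scalar $\lambda_d=\epsilon_Z\,d^{1/2}$, where $\epsilon_Z$ is the Atkin--Lehner pseudo-eigenvalue of $Z$ and $d^{1/2}$ comes from the half-integral determinant power in weight $1$; squaring to $Z^2$, whose pseudo-eigenvalue is $\chi(w_d)$, gives $\lambda_d^{2}=\chi(w_d)\,d$. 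The congruence of Step~1 descends to $\mathbb{Z}_p$ exactly when every $\lambda_d\in\mathbb{Z}_p$, i.e.\ when $\chi(w_d)\,d$ is a square (or zero) modulo $p$ (the weight-$2$ case of Theorem~\ref{main1} carries instead the integral scalar $\chi(w_d)\,d$ and so needs no hypothesis). Writing $\left(\frac{\chi(w_d)\,d}{p}\right)=\left(\frac{-1}{p}\right)^{(1-\chi(w_d))/2}\left(\frac{d}{p}\right)$ turns this into the stated trichotomy: for $p=2$ there is nothing to check; for $p\equiv1\pmod4$ one has $\left(\frac{-1}{p}\right)=1$, the sign drops out, and the requirement is $\left(\frac{d}{p}\right)\in\{0,1\}$; for $p\equiv-1\pmod4$ one has $\left(\frac{-1}{p}\right)=-1$, which merges the sign into $\chi(w_d)\left(\frac{d}{p}\right)\in\{0,1\}$.

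\textbf{Main obstacle.} I expect the crux to be the rigorous justification in Step~3: one must verify that the weight-$1$ transfer under each $w_d$ contributes exactly the scalar $\lambda_d$ with $\lambda_d^{2}=\chi(w_d)\,d$ and introduces no further ramified factor, so that the reduction genuinely takes place over $\mathbb{Z}_p$ rather than over a ramified quadratic extension. Concretely this means computing the automorphy factor of $Z$ at $[\tau_0]=w_d[i\infty]$ and checking that its $p$-adic integrality is controlled solely by whether $\chi(w_d)\,d$ is a quadratic residue. Granting this, the degree bound of Step~2 makes $A(t)$ a polynomial of degree $\le p-1$ with coefficients in $\mathbb{Z}_p$, and the Lucas congruences follow from the reduction of Step~1.
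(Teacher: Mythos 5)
Your Step 1 is exactly the paper's reduction (it suffices to show $Z(t)/Z(t^p)\bmod p$ is a polynomial of degree $\le p-1$, which is then forced to be $Z_{[p]}(t)$), and your Step 3 heuristic even lands on the right arithmetic: conditions (2) and (3) of the theorem are equivalent to your criterion that $\chi(w_d)\,d$ be a square or zero modulo $p$. But the core of the argument, your Step 2, has a genuine gap. The quotient $Z(\tau)/Z(p\tau)$ is \emph{not} a modular function for $\Gamma$: modulo $p$ it is $1/Z^{p-1}$, a meromorphic form of weight $-(p-1)\neq 0$, so it does not descend to $X(\Gamma)$, has no divisor there, and the genus-zero principle ``one pole of order $\le p-1$ implies polynomial in $t$'' simply cannot be applied to it (nor is ``poles modulo $p$'' a notion you have defined). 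The paper's proof exists precisely to repair this: one multiplies $1/Z^{p-1}$ by an auxiliary holomorphic form of the complementary weight which is $\equiv 1 \pmod p$ \emph{and} transforms correctly under all of $\Gamma$, namely
\[
G_p=\frac{1}{Z^{p-1}}\sum_{d||N}\chi(w_d)^{\epsilon}\,\bigl(E_{p-1}|w_d\bigr),
\qquad
\epsilon=\begin{cases}0,& p\equiv 1\pmod 4,\\ 1,& p\equiv -1\pmod 4,\end{cases}
\]
where the twist by $\chi(w_d)$ is forced because $Z^{p-1}=(Z^2)^{(p-1)/2}$ has trivial character when $(p-1)/2$ is even but character $\chi$ when it is odd. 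This $G_p$ \emph{is} a $\Gamma$-modular function with unique pole at $[\tau_0]$ of order $\le p-1$, hence an honest polynomial $P(t)$ of degree $\le p-1$; then $E_{p-1}\equiv 1\pmod p$ and Euler's criterion $d^{(p-1)/2}\equiv\left(\frac{d}{p}\right)\pmod p$ give
\[
P(t)\equiv \prod_{\substack{q||N\\ q\text{ prime power}}}\Bigl(1+\chi(w_q)^{\epsilon}\bigl(\tfrac{q}{p}\bigr)\Bigr)\,\frac{Z(t)}{Z(t^p)}\pmod{p}.
\]
The hypotheses on $p$ are used exactly here: they guarantee each factor is $1$ or $2$, so the constant is a $p$-adic unit and can be divided out. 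This unit-constant mechanism is quite different from your proposed ``pseudo-eigenvalue $\lambda_d\in\mathbb{Z}_p$'' criterion, which you yourself flag as unjustified and for which you offer no argument of necessity or sufficiency; without the $G_p$ construction, neither your degree bound nor the role of the conditions is ever actually established.

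A second, smaller omission: your argument silently assumes $E_{p-1}$ exists and satisfies $E_{p-1}\equiv 1\pmod p$, which fails for $p=2$ and $p=3$, and you dismiss $p=2$ with ``nothing to check'' even though condition (1) still requires proof. The paper handles these cases by separate constructions, with $E_6$ for $p=3$ and with $E_{2^m}$, $m=2+\omega_2(N)$, for $p=2$, including the extra step of descending from the statement that $Z(t^{2^{m-1}})/Z(t^{2^m})$ (respectively $Z(t^3)/Z(t^9)$) is a low-degree polynomial back to the corresponding statement for $Z(t)/Z(t^2)$ (respectively $Z(t)/Z(t^3)$).
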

\begin{remark}
  The restriction to weight $2$ for $Z^2$ in Theorem \ref{mainthm3} is made for the same reasons as in Theorem \ref{main1}.
\end{remark}

As a direct application of Theorem \ref{mainthm3}, we have the following explicit examples.

\begin{corollary}\label{Cor1}
Define the sequences \( \{T_i(n)\} \) for \( i = 1, 2, 3 \) by
\[
\sum_{n=0}^{\infty} T_i(n) t_i^n = Z_i,
\]
where
\begin{align*}
t_1(\tau) &= \frac{\eta(\tau)^2 \eta(11\tau)^2}{\left(\sum_{j,k \in \mathbb{Z}} q^{j^2 + jk + 3k^2}\right)^2},\quad  Z_1(\tau) = \sum_{j,k \in \mathbb{Z}} q^{j^2 + jk + 3k^2}, \\
t_2(\tau) &= \frac{\eta(\tau) \eta(23\tau)}{\sum_{j,k \in \mathbb{Z}} q^{j^2 + jk + 6k^2}},~~\qquad  Z_2(\tau) = \sum_{j,k \in \mathbb{Z}} q^{j^2 + jk + 6k^2}, \\
t_3(\tau) &= \frac{\eta(\tau) \eta(23\tau)}{\sum_{j,k \in \mathbb{Z}} q^{2j^2 + jk + 3k^2}},~~\qquad  Z_3(\tau) = \sum_{j,k \in \mathbb{Z}} q^{2j^2 + jk + 3k^2}.
\end{align*}
Then, the sequence \(\{T_1(n)\}\) satisfies the Lucas congruences modulo \(p\) for \(p = 2, 11\) and any prime \(p\) such that \(\left(\frac{-11}{p}\right) = 1\). Similarly, the sequences \(\{T_2(n)\}\) and \(\{T_3(n)\}\) satisfy the Lucas congruences modulo \(p\) for \(p = 2, 23\) and any prime \(p\) such that \(\left(\frac{-23}{p}\right) = 1\).
\end{corollary}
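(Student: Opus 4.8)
The plan is to obtain Corollary \ref{Cor1} as a direct application of Theorem \ref{mainthm3}, so the work splits into three pieces: (i) identifying the group $\Gamma$ and verifying the geometric hypotheses, (ii) computing the Atkin--Lehner character value $\chi(w_d)$, and (iii) translating the three numerical conditions of that theorem into the single Legendre-symbol condition in the statement. I expect the arithmetic in (iii) to be routine once (ii) is in hand, and the geometric verification in (i) to be the main obstacle.

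First I would recognize each $Z_i$ as the weight-one theta series of a binary quadratic form: $Z_1$ belongs to discriminant $-11$ and $Z_2, Z_3$ to discriminant $-23$. These are holomorphic weight-one forms on $\Gamma_0(N)$ (with $N=11,23$) carrying the quadratic nebentypus $\chi_{-N}=\left(\frac{-N}{\cdot}\right)$, so that $Z_i^2$ is a weight-two form whose character $\chi_{-N}^2$ is trivial on $\Gamma_0(N)$, exactly as Theorem \ref{mainthm3} requires. Since $X_0(11)$ has genus $1$ and $X_0(23)$ has genus $2$, whereas their Atkin--Lehner quotients $X_0(11)^{+}=X(\Gamma_0(11)+)$ and $X_0(23)^{+}=X(\Gamma_0(23)+)$ have genus $0$, I take $\Gamma=\Gamma_0(N)+$. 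I would then check the remaining hypotheses: all coefficients are rational integers, so $\mathcal{O}=\mathbb{Z}$ embeds into every $\mathbb{Z}_p$ and imposes no constraint; the normalizations $Z_i\in 1+q\mathbb{Z}\pow q$ and $t_i\in q+q^2\mathbb{Z}\pow q$ follow from the $q$-expansions of the theta series and the eta quotients; and a divisor computation on $X_0(N)^{+}$ shows that $t_i$ is a uniformizer whose only zero is at $[i\infty]$ and whose only pole is at a CM point $[\tau_0]$, while $Z_i$ has a unique zero of order at most $1$ supported at $[\tau_0]$.

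The crux is the value $\chi(w_N)$, namely the $w_N$-eigenvalue of the weight-two form $Z_i^2$. Since $t_i$ is a function on the quotient curve $X_0(N)^{+}$, it is invariant under $w_N$, and hence so is $t_i^2$. For $N=11$ one has $t_1=\eta(\tau)^2\eta(11\tau)^2/Z_1^2$, and for $N=23$ one has $t_i^2=\eta(\tau)^2\eta(23\tau)^2/Z_i^2$; in either case the numerator $\eta(\tau)^2\eta(N\tau)^2$ is a weight-two form whose $w_N$-eigenvalue is $-1$, which I would verify directly from the inversion $\eta(-1/z)=\sqrt{-iz}\,\eta(z)$ applied to $w_N\colon\tau\mapsto-1/(N\tau)$ (the normalized slash gives $\eta^2\eta_N^2\,|_2\,w_N=-\eta^2\eta_N^2$). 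Invariance of $t_i$, respectively $t_i^2$, then forces $Z_i^2$ to have $w_N$-eigenvalue $-1$ as well, so $\chi(w_N)=-1$ throughout.

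Finally I would combine this with quadratic reciprocity. The Hall divisors of $\Gamma_0(N)+$ are $1$ and $N$, so only $d=N$ yields a nontrivial condition. For $p\nmid 2N$ one has $\left(\frac{-N}{p}\right)=\left(\frac{-1}{p}\right)\left(\frac{N}{p}\right)$: when $p\equiv 1\pmod 4$ condition (2) reads $\left(\frac{N}{p}\right)\in\{0,1\}$, equivalently $\left(\frac{-N}{p}\right)\in\{0,1\}$; when $p\equiv 3\pmod 4$ condition (3) reads $\chi(w_N)\left(\frac{N}{p}\right)=-\left(\frac{N}{p}\right)\in\{0,1\}$, which since $\left(\frac{-1}{p}\right)=-1$ is again equivalent to $\left(\frac{-N}{p}\right)\in\{0,1\}$. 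Together with case (1) for $p=2$ and the ramified prime $p=N$ (where the symbol equals $0$ and $N\equiv 3\pmod 4$ places us in condition (3) with $\chi(w_N)\cdot 0=0$), this gives precisely the asserted range $p=2$, $p=N$, or $\left(\frac{-N}{p}\right)=1$ for $N=11$ (case $i=1$) and $N=23$ (cases $i=2,3$). As noted, the decisive step is the geometric verification of the uniformizing property of $t_i$ and of the divisor of $Z_i$ on $X_0(N)^{+}$; once $\chi(w_N)=-1$ is established, the reduction to the Legendre symbol is immediate.
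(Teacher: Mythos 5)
Your proposal is correct and follows essentially the same route as the paper: apply Theorem~\ref{mainthm3} with $\Gamma=\Gamma_0(N)+$ for $N=11,23$, establish that $Z_i^2$ is a weight-two form with $\chi(w_N)=-1$, and translate conditions (2) and (3) of that theorem into the single condition $\left(\tfrac{-N}{p}\right)=1$ (with $p=2$ and $p=N$ handled by condition (1) and the ``or $0$'' clause). The paper's own proof is much terser---it simply asserts the modularity of $t_1$ and the value $\chi(w_{11})=-1$ (resp. $\chi(w_{23})=-1$) before doing the Legendre-symbol computation---whereas you additionally supply the eta-inversion argument for $\chi(w_N)=-1$ and flag the genus and divisor verifications that the paper leaves implicit.
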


\begin{remark}\,

(1) We were surprised to learn from
experiment that the Lucas congruences do not seem to hold for primes $p$ that do not satisfy the assumptions of Corollary~\ref{Cor1}.

(2) From the Lucas congruences for $T_{i}(n)$ ($i=2,3$) one can easily recover the then-conjecture of Chan, Cooper, and Sica \cite{CCS} that $T_{i}(pn)\equiv T_{i}(n)\pmod{p}$ for the prescribed primes~$p$, which was proved by Osburn and Sahu \cite{OS} by a different method.
\end{remark}


{\bf Acknowledgment.} We thank the referee for the very helpful comments, suggestions and corrections, which have definitely improved the manuscript.

\section{Proof of Theorem~\ref{main1}}\label{main1proof}
We first briefly recall important operators on spaces of integer weight modular forms (see e.g. \cite[p. 27]{O}). Let $k$ be an integer and $\gamma=\begin{pmatrix}
a & b \\
c & d 
\end{pmatrix}\in \mathrm{GL}_2^+(\mathbb{R})$. Let $F(\tau)$ be any function from $\mathbb{H}$ to $\mathbb{C.}$ The \textit{weight $k$ slash operator} is defined by
\begin{align*}
F|_{k}\gamma(\tau):=\mathrm{det}(\gamma)^{\frac{k}{2}}(c\tau+d)^{-k}F(\gamma\cdot\tau).
\end{align*}
Furthermore, we say $F(\tau)$ is a \textit{modular form of weight $k$ with multiplier $\chi$ for $\Gamma,$} a discrete subgroup of $\mathrm{GL}_2^{+}({\mathbb{R}}),$ if for any $\gamma\in \Gamma,$ $$F|_{k}\gamma(\tau)=\chi(\gamma)F(\tau),$$ where $\chi:\Gamma\rightarrow\mathbb{C}$ satisfies $|\chi(\gamma)|=1.$ To ease notation, when the weight~$k$ of $F$ is specified, we write $F|\gamma$ for $F|_{k}\gamma$.

Next, let  $e$  be a \textit{Hall divisor} of $ N $; that is, $ e | N $ and $\gcd(e, N/e) = 1.$ We define the \textit{Atkin--Lehner involution} $ w_e $ by
\begin{align}\label{AL}
w_e :=
\begin{pmatrix}
a_e & b_e \\
c_e N & d_e 
\end{pmatrix} \in \mathrm{GL}_2^{+}(\mathbb{Z}),
\end{align}
where $ a_e, b_e, c_e, d_e $ are fixed integers chosen so that  $e\mid a_{e}$, $e\mid d_{e}$, and $ \det(w_e) = e $. The Atkin--Lehner involution $w_{e}$ as an operator on a modular form for $\Gamma_{0}(N)$ is independent of the choice of the integers $a_e, b_e, c_e, d_e $. See, e.g., \cite[Lemma~6.6.4]{CS} for details.


In particular, when $ e = N $, we obtain the \textit{Fricke involution} $ w_N $, which can be written as
\begin{align*}
w_N =
\begin{pmatrix}
0 & -1 \\
N & \phantom{-}0
\end{pmatrix}.
\end{align*}
This matrix satisfies $ \det(w_N) = N $ and $ w_N^2 = -N I $, where $ I $ is the identity matrix.

For any subset $S$ of the set of Hall divisors of $N,$ we define the group
\[
\Gamma_0(N) + S := \left\langle \Gamma_0(N), w_e \mid e \in S \right\rangle.
\]
This group is called an \textit{Atkin--Lehner extension} of $ \Gamma_0(N) $ by the involutions $ \{w_e \mid e \in S\}. $
\begin{itemize}
    \item If $ S = \{e\} $ consists of a single Hall divisor, then
    \[
    \Gamma_0(N) + e = \left\langle \Gamma_0(N), w_e \right\rangle.
    \]
    This group is an Atkin--Lehner extension by a single involution. For example, when $ e = N,$ the group  $\Gamma_0(N) + N $ is known as the \textit{Fricke group}.

    \item If $ S $ is the set of all Hall divisors of $N $, we obtain the \textit{full Atkin--Lehner extension}:
    \[
    \Gamma_0(N) + := \left\langle \Gamma_0(N), w_e \mid e \text{ divides } N,\, \gcd(e, N/e) = 1 \right\rangle.
    \]
\end{itemize}
Hence, by selecting any combination of the Atkin--Lehner involutions, we can construct various extensions of $\Gamma_0(N) $ that capture different symmetries and properties.

Prior to proving Theorem~\ref{main1}, we state a lemma that would serve as a key to the construction of an auxiliary function in the proof, whose proof can be found in, e.g., \cite[Lemma 1.22]{O}.

\begin{lemma}\label{lemmodp}
     Let
    $$
    E_{k}(\tau)=1+\frac{2k}{B_{k}}\sum_{n=1}^{\infty}\frac{n^{k-1}q^{n}}{1-q^{n}}
    $$
    be the normalized Eisenstein series of even weight~$k\geq2$ for $\Gamma_{0}(1)$. Then one has that
    \begin{enumerate}
        \item  $E_{2^m}(\tau)\equiv1\pmod{2^{m+1}}$ for any integer~$m\geq1$,

        \item $E_{4}(\tau)\equiv1\pmod{3},$

        \item $E_{p-1}(\tau)\equiv1\pmod{p}$ for any prime $p\geq3$.
    \end{enumerate}
\end{lemma}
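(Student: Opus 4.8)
The plan is to reduce all three congruences to a single statement about the $p$-adic valuation of the leading constant $c_k := \frac{2k}{B_k}$, and then to read that valuation off from the von Staudt--Clausen theorem. First I would expand the Lambert series into a divisor sum: since $\sum_{n \geq 1} \frac{n^{k-1} q^n}{1-q^n} = \sum_{N \geq 1} \sigma_{k-1}(N)\, q^N$ with $\sigma_{k-1}(N) = \sum_{d \mid N} d^{k-1}$, the series becomes
\[
E_k(\tau) = 1 + c_k \sum_{N \geq 1} \sigma_{k-1}(N)\, q^N, \qquad c_k = \frac{2k}{B_k}.
\]
Because each $\sigma_{k-1}(N)$ is a rational integer, every non-constant $q$-coefficient is $c_k$ times a $p$-adic integer. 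Hence, interpreting the asserted congruences coefficientwise in $\mathbb{Z}_p$ (which is the meaning one needs, since $c_k$ need not itself be a rational integer---e.g. $E_{16}$ carries a $3617$ in its denominator), it suffices to establish $v_2(c_{2^m}) \geq m+1$, $v_3(c_4) \geq 1$, and $v_p(c_{p-1}) \geq 1$ in the three respective cases.

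The key input is the von Staudt--Clausen theorem: for even $k \geq 2$ one has $B_k + \sum_{(q-1) \mid k} \tfrac{1}{q} \in \mathbb{Z}$, the sum running over primes $q$ with $(q-1)\mid k$. Consequently, if $(p-1) \mid k$, then writing $B_k = (\text{integer}) - \sum_{(q-1)\mid k} 1/q$, the only summand of negative $p$-adic valuation is $-1/p$; no cancellation can occur, so $v_p(B_k) = -1$ \emph{exactly}. Combining this with $v_p(c_k) = v_p(2k) - v_p(B_k) = v_p(2k) + 1$ yields the uniform principle
\[
(p-1) \mid k \;\Longrightarrow\; E_k \equiv 1 \pmod{p^{\,v_p(2k)+1}}.
\]

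It then remains only to specialize. For part (1), take $p = 2$ and $k = 2^m$: here $(2-1)\mid 2^m$ and $v_2(2k) = v_2(2^{m+1}) = m+1$, so the principle gives $E_{2^m} \equiv 1 \pmod{2^{m+2}}$, which is in fact sharper than the claimed $2^{m+1}$. For part (2), take $p = 3$, $k = 4$: then $(3-1)\mid 4$ and $v_3(2\cdot 4) = 0$, giving $E_4 \equiv 1 \pmod 3$ (equivalently, one simply observes $c_4 = -240 \equiv 0 \pmod 3$). For part (3), take $k = p-1$ with $p \geq 3$: then $(p-1)\mid(p-1)$ and $v_p(2(p-1)) = 0$ since $p$ is odd, so $E_{p-1} \equiv 1 \pmod p$.

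I expect the only genuine subtlety to lie in the valuation bookkeeping: specifically, in justifying that $v_p(B_k) = -1$ \emph{exactly} (rather than merely $\le -1$), which is where the no-cancellation remark above is essential, and in making explicit that the congruences are $p$-adic statements about individual $q$-coefficients, since the prefactor $c_k$ fails to be integral at auxiliary primes dividing the numerator of $B_k$. Everything else---the Lambert-to-divisor-sum rewriting and the direct invocation of von Staudt--Clausen---is routine.
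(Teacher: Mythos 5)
Your proof is correct, and it is essentially the argument behind the paper's treatment: the paper does not prove this lemma itself but cites \cite[Lemma 1.22]{O}, whose proof is exactly this standard reduction to the $p$-adic valuation of $2k/B_k$ via the von Staudt--Clausen theorem. Your extra care about the two genuine subtleties---that $v_p(B_k)=-1$ holds exactly because the term $-1/p$ is the unique summand of negative valuation, and that the congruence must be read coefficientwise in $\mathbb{Z}_p$ since $2k/B_k$ need not be a rational integer---is exactly right, and your sharper bound $2^{m+2}$ in part (1) is a correct strengthening of the stated claim.
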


Henceforth, for the sake of brevity, we just write $Z(t)$ for $\sum_{n=0}^{\infty}T(n)t^{n}$ as the local inversion of $Z(\tau)$ in $t$.  Given Lemma~\ref{lemmodp}, we are now ready for

\begin{proof}[Proof of Theorem~\ref{main1}]
Let us define the $p$-truncation of $Z(t)$,
$$
Z_{[p]}(t)=\sum_{n=0}^{p-1}T(n)t^n.
$$
It suffices to prove that $Z(t)\is Z_{[p]}(t)Z(t^p)\mod{p}$. In that case we see that
$$
Z(t)\is Z_{[p]}(t)Z_{[p]}(t^p)Z_{[p]}(t^{p^2})Z_{[p]}(t^{p^3})\cdots\mod{p}
$$
Comparison of the coefficient of $t^n$ ons both sides yields
$$
T(n)\is T_{n_0}T_{n_1}\cdots T_{n_s}\mod{p}
$$
where $n=n_0+n_1p+n_2p^2+\cdots+n_sp^s$, with $0\le n_i\le p-1$, which is the desired result.

Let us now prove our theorem when $p\ge5$. Define
$$
G_p(\tau)=\sum_{d||N}(E_{p-1}^{2}|w_{d})/Z^{p-1}=\sum_{d||N}d^{p-1}E_{p-1}^{2}(d\tau)/Z^{p-1}
$$
where the summation is over all Hall-divisors $d$ of $N$. Note that $Z^{p-1}$ is a modular form of weight $2(p-1)$ with trivial character and the same holds for $\sum_{d||N}d^{p-1}E_{p-1}^{2}(d\tau)$. So $G_p(\tau)$ is a modular function with a unique pole at $[\tau_0]$ of order $\le p-1$ because $Z(\tau)$ has a zero of order at most $p-1$ at $[\tau_0]$. Since $[\tau_0]$ is also the unique pole of $t(\tau)$ we find that $G_p(\tau)$ is a polynomial $P(t)$ of degree $\le p-1$ in $t$. 

Furthermore, using (3) of Lemma \ref{lemmodp}, we find that 
$$
P(t)=G_p\is\sum_{d||N}d^{p-1}/Z^{p-1}=\prod_{q||N,\text{ $q$ prime power}}(1+q^{p-1})/Z^{p-1}
\is 2^{\omega_p(N)}Z(t)/Z(t^p)\mod{p}
$$
where $\omega_p(N)$ is the number of distinct prime divisors of $N$ not equal to $p$. Since $2^{\omega_p(N)}$ is a $p$-adic unit we find that $Z(t)/Z(t^p)$ modulo $p$ is a polynomial of degree $\le p-1$. The coefficients of this polynomial are determined by the first $p$ coefficients of $Z(t)$, hence $Z(t)/Z(t^p)\is Z_{[p]}(t)\mod{p}$, as desired.

When $p=3$ we define
$$
G_3(\tau)=\sum_{d||N}(E_4|w_{d})/Z^{p-1}=\sum_{d||N}d^{p-1}E_4(d\tau)/Z^{p-1}
$$
and proceed as above, using (2) of Lemma \ref{lemmodp}.

The case $p=2$ is a bit more subtle. Let $m=2+\omega_2(N)$. Define
$$
G_2(\tau)=2^{2-m}\sum_{d||N}(E_{2^m}(\tau)|w_d)/Z^{2^{m-1}}=
2^{2-m}\sum_{d||N}d^{2^{m-1}}E_{2^m}(d\tau)/Z^{2^{m-1}}.
$$
By the same argument as before we find that $G_2(\tau)$ is a polynomial $P(t)$ of degree $\le 2^{m-1}$ in $t$. Note that $d^{2^{m-1}}$ is 1 modulo $2^m$ if $d$ is odd and, since $m\ge2$, $0$ if $d$ is even. Hence 
$$
\sum_{d||N}d^{2^{m-1}}=\prod_{q||N,\text{ $q$ prime power}}(1+q^{2^{m-1}})
\is 2^{\omega_2(N)}\is2^{m-2}\mod{2^m}.
$$
Using (1) of Lemma \ref{lemmodp} we find that $G_2\is1+2q\Z\pow q$. Hence
$$
P(t)\is G_2(\tau)\is1/Z^{2^{m-1}}\is Z(t^{2^{m-1}})/Z(t^{2^m})\mod{2}.
$$
Since $P(t)$ is a polynomial of degree $\le2^{m-1}$, we see that $Z(t)/Z(t^2)\mod{2}$ itself is a polynomial of degree $\le1$. Similarly as before we argue that this polynomial equals $Z_{[2]}(t)\mod2$. 
\end{proof}

\begin{remark}
    Theorem~\ref{main1} may also explain why one finds no Lucas congruences in the level 13 case of Cooper. In that case one might try 
    $$
    t = \frac{w}{1+5w+13w^2}\quad\mbox{and}\quad Z= \frac{\eta(\tau)^{2}\eta(13\tau)^{2}}{t^{7/6}}
 $$
 with $w=\frac{\eta(13\tau)^{2}}{\eta(\tau)^{2}}$.
 It turns out that $Z$ has a zero at $\tau_{0}=\frac{7+\sqrt{-3}}{26}$  of order $7/6$, which is larger than~1. Nevertheless one might carry through the argument and find that $Z(t)/Z(t^p) \equiv P(t)\pmod{p}$ where $P(t)$ is
a polynomial of degree smaller than $7p/6$. However, this is not enough to prove the Lucas congruences.
\end{remark}

\section{Proofs of Theorem~\ref{mainthm3} and Corollary~\ref{Cor1}}\label{app}

Employing the same idea as used in the proof of Theorem~\ref{main1}, we shall prove Theorem~\ref{mainthm3} by introducing similar auxiliary functions in terms of Eisenstein series.

\begin{proof}[Proof of Theorem~\ref{mainthm3}]
First observe that $Z^{p-1}=(Z^2)^{(p-1)/2}$ is a weight $p-1$ modular form with trivial character if $p\is1\mod{4}$ i.e. $(p-1)/2$ even. When $p\is-1\mod4$ the character is $\chi$. We split the proof according to this distinction.

    Assume \( p \equiv 1 \pmod{4} \). Define
\[
G_p(\tau) = \sum_{d||N} (E_{p-1} | w_d)/Z^{p-1} = \sum_{d||N} d^{(p-1)/2} E_{p-1}(d\tau)/Z^{p-1}.
\]
Since \( Z^{p-1} \) is a modular form of weight \( p-1 \) with a trivial character, \( G_p(\tau) \) is a modular function with respect to \( \Gamma \), featuring a unique pole at \([\tau_0]\) of order \(\leq p - 1\). Consequently, there exists a polynomial \( P \) of degree \(\leq p - 1\) such that \( G_p(\tau) = P(t(\tau)) \).

Using (3) of Lemma \ref{lemmodp}, we obtain
\[
P(t) \is \sum_{d||N} d^{(p-1)/2} \frac{Z(t)}{Z(t^p)} \is\prod_{q||N,\text{ $q$ prime power}} \left( 1 + \left( \frac{q}{p} \right)\right) \frac{Z(t)}{Z(t^p)} \mod{p}.
\]
As \( \left( \frac{q}{p} \right) = 1 \) or \( 0 \) for each prime-power Hall divisor \( q \), the product is not divisible by \( p \). Therefore, $\frac{Z(t)}{Z(t^p)}\mod{p}$ is a polynomial in $t$ of degree $\le p-1$ and the conclusion follows as in the proof of Theorem \ref{main1}.

For \( p \equiv -1 \pmod{4} \) and \( p > 3 \), the approach is analogous. Define
\[
G_p(\tau) = \sum_{d \parallel N} (\chi(w_d) E_{p-1} | w_d)/
Z^{p-1} = \sum_{d \parallel N} \chi(w_d) d^{(p-1)/2} E_{p-1}(d\tau)/Z^{p-1}.
\]
Since \( Z^{p-1} \) is a modular form of weight \( p-1 \) with character \( \chi \), \( G_p(\tau) \) is a modular function with respect to \( \Gamma \), possessing a unique pole at \([\tau_0]\) of order \(\leq p - 1\). Hence, there is a polynomial \( P \) of degree \(\leq p - 1\) such that \( G_p(\tau) = P(t(\tau)) \).

Again using (3) of Lemma \ref{lemmodp}, we get
\[
P(t) \is\sum_{d \parallel N} \chi(d) d^{(p-1)/2} \frac{Z(t)}{Z(t^p)} \is \prod_{q||N,\text{ $q$ prime power}} \left( 1 + \chi(w_q) \left( \frac{q}{p} \right) \right) \frac{Z(t)}{Z(t^p)} \mod{p}.
\]
As \( \chi(w_q) \left( \frac{q}{p} \right) = 1 \) or \( 0 \) for each prime-power Hall divisor \( q \), the product is not divisible by \( p \). Therefore, the conclusion follows as in the case $p\is1\mod4$.

For \( p = 3 \), we use the function
\[
G_3(\tau) = \sum_{d||N} (\chi(d)E_6 | w_d)/ Z^6 = \sum_{d||N}\chi(d) d^3 E_6(d\tau)/ Z^6,
\]
and find, using $E_6(\tau)\in 1-504q\pow q$, that
\[
{G_3} \is \prod_{q||N,\text{ $q$ prime power}} \left( 1 + \chi(w_q) \left( \frac{q}{3} \right) \right) \frac{Z(t^3)}{Z(t^9)} \mod{3}.
\]
For \( p = 2 \), we consider the function
\[
G_2(\tau) =2^{2-m}\sum_{d||N} (E_{2^m} | w_d)/ Z^{2^m} =2^{2-m}\sum_{d \parallel N} d^{2^{m-1}} E_{2^m}(\tau)/ Z^{2^m},
\]
where \( m = 2+\omega_2(N) \) as in the proof of Theorem~1.1. The proof follows similar arguments.
\end{proof}

Finally, we close this work with the proof of Corollary~\ref{Cor1}.

\begin{proof}[Proof of Corollary~\ref{Cor1}]
    For brevity, we only show the first case, as the others are similar. The function \( t_1(\tau) \) is modular with respect to \(\Gamma_0(11)+\) and \( Z_1(\tau)^2 \) is a weight 2 modular form with \(\chi(w_{11}) = -1\). The condition for primes \( p \equiv 1 \pmod{4} \) is \( \left( \frac{-11}{p} \right) = \left( \frac{11}{p} \right) = 1 \). For primes \( p \equiv -1 \pmod{4} \), the condition is \( \left( \frac{-11}{p} \right) = \chi(w_{11}) \left( \frac{11}{p} \right) = 1 \). The proof for the remaining cases is analogous, with \(\chi(w_{23}) = -1\).
\end{proof}


\begin{thebibliography}{99}

\bibitem{ABD19} B. Adamczewski, J.P. Bell, and E. Delaygue, \emph{
Algebraic independence of G-functions and congruences “\'a la Lucas”}, Annales Scientifiques de l\'Ecole Normale Sup\'erieur\'e, \textbf{52} (2019) 515--559.

\bibitem{A} R. Ap\'ery, \emph{Irrationalit\'e de $\zeta(2)$ et $\zeta(3)$}, Ast\'erisque, \textbf{61} (1979) 11--13.

\bibitem{Be22} F.Beukers, \emph{$p$-linear schemes for sequences modulo $p^r$}, 
Indag. Math. 35(4) (2024), 698-707.

\bibitem{Be24} F. Beukers, \emph{Supercongruences using modular forms}, arXiv preprint, \url{https://arxiv.org/abs/2403.03301}.

\bibitem{CCS}{H. Chan, S. Cooper and F. Sica, \emph{Congruences satisfied by Apéry-like numbers}, Int. J. Number Theory 6 (2010), no. 1, 89–97.}

\bibitem{CS} H. Cohen and F. Str\"omberg, \emph{Modular forms: A classical approach}, Graduate Studies in Mathematics, Vol
\textbf{179}, Amer. Math. Soc., Providence, 2017.

\bibitem{C12} S. Cooper, \emph{Sporadic sequences, modular forms and new series for $1/\pi$}, Ramanujan J. \textbf{29} (2012) 163--183.

\bibitem{C17} S. Cooper, \emph{Ramanujan’s Theta Functions}, Springer, Berlin, 2017.

\bibitem{C} S. Cooper, \emph{Ap\'ery-like sequences defined by four-term recurrence relations}, to appear in AMS Contemporary Mathematics.

\bibitem{Del18} E. Delaygue, \emph{Arithmetic properties of Ap\'ery-like numbers}, Compositio Mathematica \textbf{154} (2018) 249--274.

\bibitem{G} I.M. Gessel, \emph{Some congruences for Ap\'ery numbers}, Journal of Number
Theory, \textbf{14} (1982) 362--368.

\bibitem{Gor21} O. Gorodetsky, \emph{New representations for all sporadic Ap\'ery-like sequences, with applications to congruences}, Experimental Mathematics, \textbf{32}:4 (2021) 641--656.

\bibitem{Gra97} A. Granville,  \emph{Arithmetic properties of binomial coefficients I: Binomial coefficients modulo prime powers}, CMS Conference Proceedings \textbf{20} (1997) 253--275.


\bibitem{HS}{J.A. Henningsen and A. Straub, \emph{Generalized Lucas congruences and linear  $p$-schemes},
Adv. in Appl. Math. {\bf141} (2022), Paper No. 102409, 20 pp.}

\bibitem{L} E. Lucas, \emph{Sur les congruences des nombres Eul\'eriens et des coefficients
diff\'erentiels des fonctions trigonom\'etriques suivant un module premier}, Bull. Soc. Math. Fr. \textbf{6} (1878) 49--54.

\bibitem{MS16} A. Malik and A. Straub, \emph{Divisibility properties of sporadic Ap\'ery-like numbers}, Research in Number Theory, \textbf{2} (2016) 1--26.


\bibitem{McI92} R.J. McIntosh, \emph{A generalization of a congruential property of Lucas}, American Mathematical Monthly, \textbf{99} (1992) 231--238.


\bibitem{O}  K. Ono, \emph{The web of modularity: Arithmetic of the coefficients of modular forms and $q$-series}, CBMS, Regional
Conference series in Mathematics, \textbf{102}, Amer. Math. Soc., Providence, 2004.

\bibitem{OS}{ R. Osburn and B. Sahu, \emph{Congruences via modular forms}, Proc. Amer. Math. Soc. {\bf139} (2011), no. 7, 2375--2381.}

\bibitem{SvS15} K. Samol and D. van Straten, \emph{Dwork congruences and reflexive polytopes}, Annales math\'ematiques du Qu\'ebec, \textbf{39} (2015) 185--203.


\bibitem{S24}{A. Straub, \emph{Gessel-Lucas congruences for sporadic sequences}, Monatshefte für Mathematik, {\bf203}, 2024, Pages 883--898}

\bibitem{Z}{D. Zagier, \emph{Elliptic modular forms and their applications}. The 1-2-3 of modular forms, 1--103,
Springer, Berlin, 2008.}


\end{thebibliography}
\end{document}